\theoremstyle{plain}
\newtheorem{Thm}{Theorem}
\newtheorem{Pro}[Thm]{Proposition}
\newtheorem{Def}[Thm]{Definition}
\newcommand{\re}{\mathbb{R}}
\begin{document}

\title[Translating solitons]
{ Stability property and Dirichlet problem for translating solitons }

\author{Li Ma, Vicente Miquel}

\address{ Prof. Dr.Li Ma, \\
 School of Mathematics and Physics\\
  University of Science and Technology Beijing \\
  30 Xueyuan Road, Haidian District
  Beijing, 100083\\
  P.R. China
}
\email{lma17@ustb.edu.cn}

\address{ Vicente Miquel \\
Department of Geometry and Topology \\
University of Valencia \\
46100-Burjassot (Valencia), Spain
}

\email{miquel@uv.es}

\thanks{The research is partially supported by the National Natural Science
Foundation of China No.11771124 and by the
MINECO (Spain) and FEDER  project MTM2016-77093-P. \\
 This work was done when the first named
author was visiting Valencia University in July 2019 and he would
like to thank the hospitality of the Department of Mathematics.
}

\begin{abstract}
In this paper, we prove that the infimum of the mean curvature is zero
for a translating solitons of hypersurface in $\re^{n+k}$. We give some conditions under which a complete hypersurface translating soliton is stable. We show that if the norm of its mean curvature is less than one, then the weighted volume may have exponent growth. We also study the Dirichlet problem for graphic translating solitons in higher codimensions.

{ \textbf{Mathematics Subject Classification 2010}:
53C21,53C44}

{ \textbf{Keywords}: translating solitons, stability, non-splitting, Dirichlet problem}
\end{abstract}

 \maketitle

\section{introduction}
The study of translating solitons to mean curvature flow is an important subject in the understanding of the singularity analysis of geometric analysis \cite{H} \cite{I} \cite{MSS} \cite{Ma} \cite{X}\cite{W}\cite{Sm}.
In this paper, we continue to study the translating solitons of properly immersed submanifolds \cite{MV}
$F=F(X,t)\subset \re^{n+k}$, $X\in M^n, 0\leq t<T$, evolving under the mean
curvature flow defined by
$$(\partial_tF)^{\perp}=\vec{H}(F),$$
where $\vec{H}(F)$ is the mean curvature vector of the submanifold
$F=F(X,t)$ at time $t$ and $M\subset \re^{n+k}$ is a fixed manifolds. We denote by
$$
M(t)=F(M,t),\ \text{ and by }\   V(t)=vol (M(t)),
$$
the volume of the surface $(M(t))$.

Recall that when $1\leq \alpha\leq k$, translating solitons are characterized by the soliton
equation
\begin{equation}\label{soliton}
H_\alpha=<\nu_\alpha,W>
\end{equation}
where $\nu_\alpha$ is an outer unit normal to the fixed surface $M\subset
\re^{n+k}$, $\vec{H}(F)=-H_\alpha\nu_\alpha$, and $W$ is a fixed unit vector in
$\re^{n+k}$.  When there is no confusion, we identify the position vector $X$ of $F(M^n)$ in $\re^{n+1}$. Here and below, we use the notations as in \cite{MV} and \cite{H} such that $\vec{H}=-H_\alpha\nu_\alpha$ is the mean curvature vector and $A=\{h_{\alpha ij}\}:=(h_\alpha)$ is the second fundamental form with $h_{\alpha ij}=<D_{e_i}\nu_\alpha,e_j>$ and $H_\alpha=h_{\alpha jj}$ for the moving orthonormal frame ${e_i}$ on $M$ and the moving normal orthonormal frame $\{\nu_\alpha\}$, where $D$ denotes the usual directional derivative in $\re^{n+k}$. Define the function $S(X)=<X, W>$. Then the soliton equation \eqref{soliton} can
be written as
\begin{equation}\label{HDS}
H_\alpha = D_{\nu_\alpha} S  \ \ \text{in}  \ \  M.
\end{equation}

We have the following property about translating solitons.

\begin{Pro}\label{valencia1}
 If $M$ is a complete translating soliton $M$  such that $\nabla^\bot \xi=0$ for every point where $\xi= -\vec{H}/|\vec{H}|$ is well defined  (in particular for $M$ a hypersurface) one has
$$
\inf_M |H|^2=0,
$$
which is equivalent to the relation
$$
\sup_M |\nabla S|^2=1,
$$
 where $\nabla S$ denotes the gradient of $S$ restricted to $M$.
\end{Pro}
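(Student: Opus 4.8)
The plan is to separate the two assertions: the stated equivalence is a pointwise identity, whereas $\inf_M|H|^2=0$ is a genuinely global fact. First I would record the orthogonal decomposition $W=W^\top+W^\perp$ of the fixed unit vector along $M$. Since $S(X)=\langle X,W\rangle$ is the restriction of a linear function, $\nabla S$ is exactly the tangential part $W^\top$, so $|\nabla S|^2=|W^\top|^2$. On the other hand the soliton equation $H_\alpha=\langle\nu_\alpha,W\rangle$ gives $W^\perp=\sum_\alpha\langle\nu_\alpha,W\rangle\nu_\alpha=\sum_\alpha H_\alpha\nu_\alpha=-\vec H$, hence $|W^\perp|^2=|\vec H|^2=|H|^2$. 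As $|W|^2=1$, the Pythagorean relation $|\nabla S|^2+|H|^2=1$ holds at every point, and this immediately yields the equivalence $\inf_M|H|^2=0\Leftrightarrow\sup_M|\nabla S|^2=1$. It remains to prove the common value, i.e. that $\inf_M|H|^2=0$.

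Next I would convert this into a statement about $S$. Differentiating the linear function twice along $M$ gives $\mathrm{Hess}\,S(e_i,e_j)=\langle W,(D_{e_i}e_j)^\perp\rangle=-\phi\,h^\xi_{ij}$, where $\phi:=|H|=\langle W,\xi\rangle\ge0$ and $h^\xi_{ij}=\langle D_{e_i}\xi,e_j\rangle$ is the second fundamental form in the direction $\xi=-\vec H/|\vec H|$, which is well defined by hypothesis. Tracing gives $\Delta_M S=\langle\vec H,W\rangle=-|H|^2$; equivalently, $v:=e^{-S}>0$ satisfies the clean equation $\Delta_M v=v$. Now argue by contradiction: if $\inf_M|H|^2=c>0$, then $\Delta_M S\le -c<0$ while $|\nabla S|^2\le 1-c$, so $S$ is a uniformly superharmonic function with bounded gradient on the complete manifold $M$.

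To reach a contradiction I would exploit the soliton structure through $\phi$ itself. Using $\nabla^\perp\xi=0$, the Codazzi equations make $\nabla h^\xi$ totally symmetric, and combined with $\mathrm{Hess}\,S=-\phi\,h^\xi$ one obtains the drift identity $\Delta_M\phi-\langle\nabla S,\nabla\phi\rangle=-\phi\,|A|^2\le0$; that is, $\phi$ is superharmonic for the operator $\mathcal L=\Delta_M-\langle\nabla S,\nabla\,\cdot\,\rangle$, which is self-adjoint with respect to $e^{-S}d\mu$, while $\mathcal L S=\Delta_M S-|\nabla S|^2=-1$. I would then apply a maximum principle at infinity (Omori--Yau type) to $S$ along an exhaustion of the complete $M$, producing points $x_k$ at which $\Delta_M S(x_k)\to0^-$, in direct contradiction with $\Delta_M S\le -c$, forcing $c=0$.

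The main obstacle is precisely this last, global step. The intrinsic information alone --- a function with $\Delta_M S\le -c$ and bounded gradient --- does not contradict completeness: integrating $\Delta_M S$ over geodesic balls only forces exponential volume growth, and the distance function on hyperbolic space already satisfies $\Delta\rho\ge n-1>0$ with $|\nabla\rho|=1$. Hence the argument must feed the extrinsic soliton identities, namely $|\nabla S|^2=1-|H|^2$ and the equation for $\phi$, into the maximum principle. Verifying the geometric conditions under which that principle applies --- completeness together with the control of $\mathrm{Hess}\,S=-\phi\,h^\xi$ coming from the soliton equation, and, if needed, properness of the immersion --- so that the infimum of $\phi$ is attained at infinity with the correct second-order behaviour, is where the real work lies.
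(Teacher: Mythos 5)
You have the pointwise part exactly right, and your drift identity $\Delta\phi-\langle\nabla S,\nabla\phi\rangle=-\phi\,|A_\xi|^2$ (note $\nabla S=W^\top$) is precisely the key computation in the paper's proof (its formula \eqref{DH}, specialized to $\nabla^\bot\xi=0$). But the global step --- the actual heart of the proof --- is wrong as proposed. You apply the Omori--Yau principle to $S$, hoping for points with $\Delta S(x_k)\to 0^-$. First, producing such points requires a \emph{minimum}-type principle near $\inf S$ (a maximum principle near $\sup S$ only gives the useless upper bound $\Delta S(x_k)<1/k$, perfectly consistent with $\Delta S\le -c$), and that requires $S$ to be bounded below; it is not. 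Under the paper's normalization ($\Delta S=-|\vec H|^2$) the bowl and the grim reaper have $S$ unbounded below, the paper's Proposition \ref{valencia2} shows $\inf_M S$ is only approached at infinity, and indeed your own observation that $v=e^{-S}>0$ solves $\Delta v=v$ shows $S$ can never be bounded below on a manifold where the weak maximum principle holds (otherwise $0<\sup v=\lim\Delta v(x_k)\le 0$). Second, and more fundamentally, since $\Delta S=-|\vec H|^2$, the existence of points with $\Delta S(x_k)\to 0$ \emph{is} the statement $\inf_M|H|^2=0$; and your identity $\mathcal{L}S=-1$ holds identically on every translating soliton, independently of whether $\inf|H|^2>0$, so no contradiction specific to the hypothesis $c>0$ can be extracted from $S$ alone. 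The step is circular: the tool you invoke cannot be applied, and what it is supposed to produce is the conclusion itself.

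The repair is to apply the weak maximum principle to the function you already control, namely $\phi=H$, not to $S$ --- this is what the paper does. Under the contradiction hypothesis $\inf_M H=\varepsilon>0$, your drift identity together with $|A_\xi|^2\ge H^2/n$ gives $\Delta H\le -\langle W^\top,\nabla H\rangle-\varepsilon^3/n$. Since $H$ \emph{is} bounded below, the weak maximum principle in the form of Theorem A of \cite{AAR} applies to it and yields a sequence $x_k$ with $H(x_k)<\varepsilon+1/k$, $|\nabla H(x_k)|<1/k$ and $\Delta H(x_k)>-1/k$; plugging into the inequality gives $-1/k<1/k-\varepsilon^3/n$, a contradiction for large $k$. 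In that argument $S$ enters only in an auxiliary role: it is the function $\gamma$ in \cite{AAR} (with $|\nabla S|\le 1$, $|\Delta S|=|\vec H|^2\le 1$) that certifies the validity of the principle on the complete soliton $M$. Your proposal conflates these two roles of $S$; once the principle is aimed at $H$ instead, your computation closes the proof along the same lines as the paper.
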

 We conjecture that the property is true without the condition $\nabla^\bot\xi=0$.

 \begin{Pro}\label{valencia2}
 If $M$ is a complete translating soliton $M$, then for any point $X\in M$,
 $$
 S(X) >\inf_M S(X)=\underline{\lim}_{d(X,X_0)\to \infty} S(X),
 $$
 where $X_0\in M$ and $d(X,X_0)$ is the geodesic distance between $X$ and $X_0$ on $M$.
\end{Pro}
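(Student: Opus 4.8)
The plan is to exploit the second-order equation satisfied by the height function $S=\langle X,W\rangle$ on $M$, together with geodesic completeness. First I would establish the identity $\Delta S=-|H|^2$, where $\Delta$ is the Laplace--Beltrami operator of the induced metric. This is immediate once one restricts the ambient linear function $S$ to $M$: since its ambient Hessian vanishes, $\Delta S=\langle W,\vec H\rangle$ (equivalently $\Delta X=\vec H$ for the position vector), and the soliton equation $\vec H=-H_\alpha\nu_\alpha$ with $H_\alpha=\langle\nu_\alpha,W\rangle$ turns this into $\langle W,\vec H\rangle=-\sum_\alpha H_\alpha^2=-|H|^2\le 0$. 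In particular $S$ is superharmonic on $M$, and from the orthogonal splitting $W=\nabla S+W^\perp$ one also has $|\nabla S|^2=1-|H|^2$.

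Next, for the strict inequality $S(X)>\inf_M S$, I would show that $S$ can have no interior minimum. If $S$ attained a minimum at a point $p$, the first-order condition would give $\nabla S(p)=W^\top(p)=0$, hence $W$ would be purely normal at $p$ and $|H(p)|^2=|W^\perp(p)|^2=1$; the second-order condition would give $\Delta S(p)\ge 0$. But the identity forces $\Delta S(p)=-|H(p)|^2=-1<0$, a contradiction. Since $M$ is complete and boundaryless, $\inf_M S$ is therefore attained at no point, so $S(X)>\inf_M S$ for every $X\in M$.

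Finally, for the equality $\inf_M S=\liminf_{d(X,X_0)\to\infty}S$, the inequality $\inf_M S\le\liminf_{d\to\infty}S$ is trivial, so it remains to bound $\liminf_{d\to\infty}S$ from above by $\inf_M S$. I would pick a minimizing sequence $X_n$ with $S(X_n)\to\inf_M S$. If some subsequence stayed in a bounded set, then by Hopf--Rinow (completeness makes closed bounded subsets of $M$ compact) it would subconverge to a point realizing $\inf_M S$, contradicting the previous paragraph; hence $d(X_n,X_0)\to\infty$, which yields $\liminf_{d\to\infty}S\le\inf_M S$ and closes the argument.

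The computation of $\Delta S$ is routine, and the genuine content lies in the maximum-principle step: the strict sign $\Delta S=-1$ at any hypothetical critical minimum is exactly what rules out attainment of the infimum, while completeness (through the minimizing-sequence/Hopf--Rinow dichotomy) is what pushes the infimum out to infinity. The only point needing care is the possibility $\inf_M S=-\infty$, but this is handled uniformly, since a sequence along which $S\to-\infty$ cannot remain in a compact set either.
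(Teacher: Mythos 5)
Your proof is correct and follows essentially the same route as the paper: both rule out an interior minimum of $S$ by combining the identity $\Delta S=-|\vec H|^2$ with $|\nabla S|^2+|\vec H|^2=1$ at a hypothetical minimum point (you merely invoke the first- and second-order conditions in the opposite order, deducing $|\vec H(p)|^2=1$ first rather than $\vec H(p)=0$ first). Your additional Hopf--Rinow/minimizing-sequence argument for $\inf_M S=\liminf_{d(X,X_0)\to\infty}S(X)$, and the remark on the case $\inf_M S=-\infty$, fill in details the paper leaves implicit, but this is the standard completion rather than a different method.
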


 When we look at the nicest examples of translating solitons, the grim reaper and the bowl, we see that $\sup_M H^2=1$. However, the tilted grim reapers give examples with all the possible values of $\sup_M H^2$ between $1$ and $0$. It is an interesting question to consider geometric and analytical properties about translating solitons $M$ with $\sup_M H^2<1$.

 To state our result, we set
 $$
 dm=dm_X= \exp(-S(X)) dv_{g}.
 $$
 We have the following weighted volume growth result.
\begin{Thm}\label{mono}
 Assume that $(M,g)$ is a complete  translating soliton in  $\re^{n+k}$. Assume that $\sup_M|\bar{H}|^2<1$. Then there exists a positive number $a_0>1$ and $\epsilon>0$ such that for all  $a>a_0$, $u(X)=\exp(aS(X))$ on $ M$,
 we have the exponent growth of the weighted volume,i.e.,
 $$
 \int_{M_R} u dm\geq \exp (\epsilon (R-R_0))\int_{M_{R_0}} u dm,
 $$
 where $M_R=B_R(0)\cap M$ with $R\geq R_0>0$.
\end{Thm}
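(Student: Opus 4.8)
The plan is to reduce the statement to a first-order differential inequality for the weighted volume $\phi(R):=\int_{M_R}u\,dm$ and then integrate it. Two algebraic identities drive everything. Since the soliton equation \eqref{soliton} gives $\vec H=-W^{\perp}$ and the linear function $S=\langle X,W\rangle$ has $\nabla S=W^{\top}$, the splitting $W=W^{\top}+W^{\perp}$ yields $|\nabla S|^{2}+|\vec H|^{2}=|W|^{2}=1$, the identity underlying Proposition \ref{valencia1}. Combining this with $\Delta_M X=\vec H$, which gives $\Delta S=\langle\vec H,W\rangle=-|W^{\perp}|^{2}=-|\vec H|^{2}$, and introducing the drift Laplacian $\Delta_S f=\Delta f-\langle\nabla S,\nabla f\rangle$ attached to $dm=\exp(-S)\,dv_g$, we obtain the clean identity $\Delta_S S=-1$. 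The hypothesis $\sup_M|\vec H|^{2}<1$ then reads as the crucial lower bound $|\nabla S|^{2}=1-|\vec H|^{2}\ge\delta:=1-\sup_M|\vec H|^{2}>0$.

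First I would compute $\Delta_S u$ for $u=\exp(aS)$. Using $\nabla u=au\nabla S$ together with the identities above, a direct calculation gives
\[
\Delta_S u=au\big[(a-1)|\nabla S|^{2}-|\vec H|^{2}\big]=au\big[(a-1)-a|\vec H|^{2}\big].
\]
Since $|\vec H|^{2}\le 1-\delta$, the bracket is at least $a\delta-1$, so setting $a_0=1/\delta$ we get, for every $a>a_0$,
\[
\Delta_S u\ge a(a\delta-1)\,u=:\lambda u,\qquad \lambda>0.
\]
Thus $u$ is a positive weighted subsolution with a strictly positive coefficient, which is precisely the mechanism forcing exponential weighted-volume growth.

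Next I would integrate this over the extrinsic ball $M_R=M\cap B_R(0)$, which is compact by properness. The weighted divergence theorem, i.e. $\di(\exp(-S)\nabla u)=\exp(-S)\,\Delta_S u$, gives
\[
\lambda\,\phi(R)\le\int_{M_R}\Delta_S u\,dm=\int_{\partial M_R}\langle\nabla u,\eta\rangle\,dm_\partial,
\]
with $\eta$ the outward unit conormal and $dm_\partial=\exp(-S)\,d\sigma$. On $\partial M_R$ we bound $\langle\nabla u,\eta\rangle=au\langle\nabla S,\eta\rangle\le au|\nabla S|\le au$, because $|\nabla S|\le 1$ and $|\eta|=1$. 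Writing $r(X)=|X|$, so that $M_R=\{r<R\}$ and $|\nabla r|\le 1$ (the tangential gradient of an ambient function of unit gradient), the coarea formula gives $\phi'(R)=\int_{\partial M_R}u\,|\nabla r|^{-1}\,dm_\partial\ge\int_{\partial M_R}u\,dm_\partial$ for almost every $R$. Chaining the three displays yields $\lambda\phi(R)\le a\,\phi'(R)$, that is,
\[
\frac{d}{dR}\log\phi(R)\ge\frac{\lambda}{a}=a\delta-1=:\epsilon>0,
\]
and integrating from $R_0$ to $R$ produces the claimed inequality $\phi(R)\ge\exp(\epsilon(R-R_0))\,\phi(R_0)$.

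The routine parts (the two geometric identities and the computation of $\Delta_S u$) are straightforward. The step requiring the most care is the last one: justifying the divergence theorem and the coarea differentiation on the extrinsic balls, whose boundaries $\partial M_R$ are smooth only for almost every $R$ by Sard's theorem, and controlling the critical set of $r$ where $|\nabla r|$ vanishes. One checks that $\phi$ is monotone nondecreasing and locally absolutely continuous, so that the almost-everywhere differential inequality may legitimately be integrated; I expect this to be the main, though essentially standard, technical obstacle.
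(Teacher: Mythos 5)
Your proposal is correct and follows essentially the same route as the paper: the same test function $u=\exp(aS)$, the same drift Laplacian $Lf=\Delta f-\langle\nabla S,\nabla f\rangle$ with the pointwise inequality $Lu\geq(\text{const})\,u$ derived from $|\nabla S|^{2}+|\vec H|^{2}=1$ and $\Delta S=-|\vec H|^{2}$, the same weighted divergence theorem on extrinsic balls, and the same coarea bound $\phi'(R)\geq\int_{\partial M_R}u\,dm$ leading to the integrated differential inequality. Your explicit choices $a_0=1/\delta$, $\epsilon=a\delta-1$ match the paper's implicit ones, and your added care about Sard's theorem and the absolute continuity of $\phi$ only makes explicit what the paper leaves tacit.
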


We may ask which condition could make a hypersurface translating soliton stable in the sense that
for any $f\in C_0^2(M)$, we have
 $$
 \int_M(-\Delta f +\nabla_{W^T} f-|A|^2f)f\exp(-S(X))dv_g\geq0.
 $$
 Note that this stability condition is equivalent to the statement that for any compact smooth subdomain $\Omega\subset M$, the first eigenvalue $\lambda_1(\Omega,dm)$ defined by
 $$
 \lambda_1(\Omega,dm)=\inf\{\int_\Omega (|\nabla f|^2-|A|^2f^2)dm; f\in H^1_0(\Omega,dm),\ \int_\Omega f^2dm=1\}
 $$
 is non-negative.

 We have the following result.
\begin{Thm}\label{mali-VL2}
 Assume that $(M,g)$ is a complete hypersurface translating soliton in  $\re^{n+1}$.
Then under the condition that $a(a-1)-a^2H^2+|A|^2\leq 0$ on $M$ for some real constant $a>0$,
 $M$ is stable in the sense above.
\end{Thm}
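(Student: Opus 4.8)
The plan is to recast stability as the statement that the weighted (drift) operator $Lf = -\Delta_S f - |A|^2 f$, where $\Delta_S f = \Delta f - \langle \nabla S, \nabla f\rangle$, is nonnegative with respect to $dm$, and then to produce an explicit positive supersolution built directly from $S$. First I would observe that on $M$ the tangential gradient of $S$ is exactly $W^\top$, so that $\nabla_{W^T} f = \langle \nabla S, \nabla f\rangle$ and the stability integrand is precisely $(Lf)f$. Since $\Delta_S$ is self-adjoint for the measure $dm = \exp(-S)\,dv_g$, integration by parts over $\operatorname{supp}(f)$ (no boundary terms, as $f$ is compactly supported) yields
$$
\int_M (Lf)\,f \, dm = \int_M |\nabla f|^2 \, dm - \int_M |A|^2 f^2 \, dm ,
$$
so stability is equivalent to the weighted inequality $\int_M |A|^2 f^2\, dm \le \int_M |\nabla f|^2\, dm$ for all $f \in C_0^2(M)$.

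Next I would compute the two geometric identities that tie the soliton structure to the algebraic hypothesis. Using $\Delta X = \vec{H} = -H\nu$ together with the soliton equation $H = \langle \nu, W\rangle$, one gets $\Delta S = \langle \vec{H}, W\rangle = -H^2$, while $|\nabla S|^2 = |W^\top|^2 = 1 - \langle \nu, W\rangle^2 = 1 - H^2$. With these in hand I would test the operator on $u = \exp(aS)$; a direct computation from $\Delta_S u = \exp(aS)\bigl(a\,\Delta S + a(a-1)|\nabla S|^2\bigr)$ gives
$$
\Delta_S u = \exp(aS)\bigl(a(a-1) - a^2 H^2\bigr),
$$
so the hypothesis $a(a-1) - a^2 H^2 + |A|^2 \le 0$ is exactly the statement $\Delta_S u + |A|^2 u \le 0$ with $u > 0$; that is, $u$ is a positive supersolution of the stability operator.

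The final step is the logarithmic argument (in the spirit of Fischer-Colbrie and Schoen) turning a positive supersolution into the desired inequality. Writing $v = \log u = aS$, the supersolution inequality becomes $\Delta_S v + |\nabla v|^2 + |A|^2 \le 0$, hence pointwise $|A|^2 \le -\Delta_S v - |\nabla v|^2$. Multiplying by $f^2$, integrating against $dm$, integrating the $\Delta_S v$ term by parts to replace $-\int_M (\Delta_S v) f^2\, dm$ by $2\int_M f\,\langle \nabla v, \nabla f\rangle\, dm$, and then applying the Young inequality $2 f\,\langle \nabla v, \nabla f\rangle \le |\nabla f|^2 + f^2|\nabla v|^2$ makes the $|\nabla v|^2$ contributions cancel, leaving
$$
\int_M |A|^2 f^2 \, dm \le \int_M |\nabla f|^2 \, dm ,
$$
which is exactly the stability inequality, so $\lambda_1(\Omega, dm) \ge 0$ and $M$ is stable.

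I expect the main obstacle to be not any single hard estimate but the correct bookkeeping of the sign and frame conventions: verifying $\Delta X = -H\nu$ and $|\nabla S|^2 = 1 - H^2$ in the paper's conventions, confirming the self-adjointness of $\Delta_S$ for $dm$ so the integrations by parts leave no boundary terms, and checking that the algebraic identity for $\Delta_S \exp(aS)$ reproduces precisely the quantity $a(a-1) - a^2 H^2$ that appears in the hypothesis. Once these are aligned, the choice $u = \exp(aS)$ does all the work and the rest is the standard supersolution-to-stability passage.
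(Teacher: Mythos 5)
Your proposal is correct, and its core coincides with the paper's: both proofs hinge on exactly the same computation showing that $u=\exp(aS)$ is a positive supersolution, namely $\Delta u - \nabla_{W^T}u + |A|^2 u = \bigl[a(a-1)-a^2H^2+|A|^2\bigr]u \le 0$ under the hypothesis, via the soliton identities $\Delta S=-H^2$ and $|\nabla S|^2=1-H^2$. Where you genuinely diverge is the passage from this supersolution to stability. The paper runs a Barta-type argument (citing Lemma 1.36 of Colding--Minicozzi): on each compact subdomain $\Omega$ it takes the first Dirichlet eigenfunction $\phi>0$ of the weighted operator $\Delta+|A|^2-\nabla_{W^T}$, applies Green's identity for the measure $dm$, and uses the Hopf boundary point lemma ($\partial_\nu\phi<0$ on $\partial\Omega$) to conclude $\lambda_1(\Omega)\int_\Omega\phi u\,dm \ge -\int_\Omega\phi\,Qu\,dm\ge 0$, hence $\lambda_1(\Omega,dm)\ge 0$. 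You instead use the logarithmic substitution in the spirit of Fischer-Colbrie and Schoen: setting $v=\log u=aS$, the supersolution inequality becomes the pointwise bound $|A|^2\le -\Delta_S v-|\nabla v|^2$, which after multiplication by $f^2$, integration by parts against $dm$, and Young's inequality yields $\int_M|A|^2f^2\,dm\le\int_M|\nabla f|^2\,dm$ directly. Your route is more self-contained: it needs only the self-adjointness of the drift Laplacian with respect to $dm$ and Cauchy--Schwarz, whereas the paper's route presupposes existence, positivity and boundary regularity (Hopf's lemma) of the first eigenfunction for the weighted eigenvalue problem on compact subdomains. The paper's route, in exchange, lands directly on the statement $\lambda_1(\Omega,dm)\ge 0$, which is literally the form in which stability is defined in the introduction; your version proves the equivalent quadratic-form inequality, and you correctly note (as the paper also does) that the two formulations agree.
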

The proof of this result will be given in section \ref{sect2}.
One example may be $a=1$ and $|A|^2\leq H^2$, which is always true for convex hypersurfaces.

\vspace{0.6cm}

We also have the following result.
\begin{Thm}\label{mali-VL3}
Assume that $(M,g)$ is a complete hypersurface translating soliton in  $\re^{n+1}$.
Assume that there is a direction $e$ such that $<\nabla H,e>\leq 0$ on $M$ and $<\nu, e>\geq $ on $M$
such that for some $p<0$,
$$
\frac{1}{R^2}\int_{T_R} <\nu, e>^{p+1}\to 0, \ \ as \ R\to \infty,
$$
where $T_R=M_{2R}\setminus M_R$.
Then $M$ is a hyperplane.
\end{Thm}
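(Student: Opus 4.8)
The plan is to analyze the scalar function $u=\langle\nu,e\rangle$, which by the sign hypothesis is non-negative (and which I take to be strictly positive; see the last paragraph), and to use that it is a supersolution of the Jacobi operator. First I would record the classical identity for the Gauss map of a hypersurface in $\re^{n+1}$. Taking a local orthonormal frame $\{e_i\}$ geodesic at the point and using $D_{e_i}\nu=h_{ij}e_j$, the Gauss formula $D_{e_i}e_j=\nabla_{e_i}e_j-h_{ij}\nu$, and the Codazzi equation $\sum_i\nabla_i h_{ij}=\nabla_j H$ of the flat ambient space, one gets
$$\Delta u=\langle\nabla H,e\rangle-|A|^2u.$$
Combining this with the hypothesis $\langle\nabla H,e\rangle\le0$ and with $u\ge0$ yields the differential inequality $\Delta u\le-|A|^2u\le0$, so $u$ is a non-negative superharmonic function on $M$. (Note this identity is purely hypersurface geometry; the soliton structure enters only through the hypotheses.)

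Next I would test this inequality against $u^{p}\phi^{2}$, where $p<0$ is the exponent in the statement and $\phi$ is a Lipschitz cut-off with $\phi\equiv1$ on $M_R$, $\phi\equiv0$ outside $M_{2R}$, and $|\nabla\phi|\le C/R$ supported in $T_R$ (take $\phi=\eta(|X|)$, so that $|\nabla\phi|\le|\eta'|\le C/R$ since $|\nabla|X||\le1$). Since $u>0$ and is continuous, $u^{p}$ is bounded and smooth on the compact support of $\phi$, so integrating by parts is legitimate and produces no boundary term. Multiplying $\Delta u\le-|A|^2u$ by $u^{p}\phi^{2}\ge0$, integrating over $M$ with respect to $dv_g$, and integrating by parts once gives
$$\int_M|A|^2u^{p+1}\phi^2\,dv_g+|p|\int_M u^{p-1}\phi^2|\nabla u|^2\,dv_g\le 2\int_M u^{p}\phi\,|\nabla\phi|\,|\nabla u|\,dv_g,$$
where crucially both terms on the left are non-negative precisely because $p<0$ (so $-p=|p|>0$). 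The right-hand side is then handled by Young's inequality, splitting $u^{p}\phi|\nabla\phi||\nabla u|=(u^{(p-1)/2}\phi|\nabla u|)(u^{(p+1)/2}|\nabla\phi|)$ and choosing the Young parameter equal to $|p|/2$ so as to absorb the gradient term $|p|\int u^{p-1}\phi^2|\nabla u|^2\,dv_g$ into the left-hand side.

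Carrying out this absorption and discarding the remaining non-negative gradient term leaves the Caccioppoli-type inequality
$$\int_{M_R}|A|^2u^{p+1}\,dv_g\le\int_M|A|^2u^{p+1}\phi^2\,dv_g\le\frac{2}{|p|}\int_M u^{p+1}|\nabla\phi|^2\,dv_g\le\frac{C'}{R^2}\int_{T_R}u^{p+1}\,dv_g,$$
using that $\nabla\phi$ is supported in $T_R$ with $|\nabla\phi|\le C/R$ and that $\phi\equiv1$ on $M_R$. Letting $R\to\infty$ and invoking the growth hypothesis $R^{-2}\int_{T_R}\langle\nu,e\rangle^{p+1}\to0$ forces $\int_M|A|^2u^{p+1}\,dv_g=0$. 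As $u>0$ this gives $A\equiv0$, so $M$ is totally geodesic, and being a complete hypersurface in $\re^{n+1}$ it is an affine hyperplane.

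The step I expect to be the main obstacle is the degeneracy $u=0$: the argument needs $u>0$ so that $u^{p}$ with $p<0$ is defined and $u^{p}\phi^2$ is admissible. By the strong maximum principle applied to $\Delta u\le0$, a non-negative $u$ is either strictly positive or identically zero; the vanishing case is $e$ everywhere tangent, i.e. a splitting such as the grim reaper $\times\,\re$, which is a translating soliton but not a hyperplane and which satisfies $\langle\nabla H,e\rangle=0$. Hence strict positivity of $\langle\nu,e\rangle$ is exactly what excludes these cylinders and is essential for the conclusion; the remaining delicate point is purely technical, namely the uniform control of $u^{p}$ near $\{u\to0\}$ at infinity, which is precisely what the weighted growth hypothesis is designed to provide.
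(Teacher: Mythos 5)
Your proof is correct and follows essentially the same route as the paper's: the Gauss-map identity $\Delta\langle\nu,e\rangle=\langle\nabla H,e\rangle-|A|^2\langle\nu,e\rangle$, testing against $u^p\eta^2$ with $p<0$, Cauchy--Schwarz/Young absorption of the gradient term, and the hypothesis $R^{-2}\int_{T_R}u^{p+1}\to 0$ to conclude $A\equiv 0$. Your closing discussion of strict positivity of $u=\langle\nu,e\rangle$ via the strong maximum principle (and the grim-reaper-cylinder counterexample when $u\equiv 0$) addresses a genuine point that the paper's proof passes over silently, since the paper only assumes $f$ ``non-trivial'' and the printed hypothesis $\langle\nu,e\rangle\geq$ is visibly truncated.
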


  Note that
 $$
 <\nabla H,e>=II(e^T,a^T).
 $$
   We remark that the assumption is natural in the case when $M=\{x+u(x)e_{n+1}; x\in \Omega\subset R^n\}$ is a graph of the function $u$ and we choose $e=e_{n+1}$.
 \vspace{0.8cm}

We also consider the Dirichlet problem for the translating mean curvature flow of properly immersed submanifolds
$F=F(X,t)\in \re^{n+k}$, $X\in M^n,$ $0\leq t<T$; that is, the submanifolds evolve under the translating mean
curvature flow defined by
\begin{equation}\label{TMCF}
(\partial_tF)^\perp=\vec{H}(F) + W^\perp, \ \
\end{equation}
where $\vec{H}(F)$ is the mean curvature vector of the surface
$F=F(X,t)$ at time $t$, $W\in \re^{n+k}$ is a fixed unit vector and $W^\perp$ is the normal part of it,
and $M=F(\Omega, 0)\subset \re^{n+k}$ is a fixed surface and $\Omega\subset \re^{n}$ is bounded domain with boundary $\partial \Omega$.
Given the immersed submanifold $F_0(x)=F(x, 0)$ for $x\in \Omega$.
The Dirichlet boundary data and the initial data are given by $F(X,0)=F_0(X)$ for $X\in \Omega$ and $F(X,t) = F_0(X)$ for $X\in \partial \Omega$, that we shall abbreviate as
$$F\mid_{\Omega, \partial \Omega}=F_0\mid_{\Omega,\partial \Omega}.$$
We remark that the short-time existence of the solutions to (\ref{TMCF}) with the Dirichlet data and initial data is well-known. The Dirichlet problem for the translating mean curvature flow of graph hypersurfaces in $\re^{n+1}$ has been studied in \cite{Wm} and \cite{Ma2}, where the global flow is established and convergence result is also obtained in the bounded convex domain in $\re^n$. The key point is the use of Bowl solitons as the comparison surfaces.
It is clear that the Dirichlet problem for the translating mean curvature flow of properly immersed surfaces in higher co-dimensions is a very difficult problem. We can make some results for the graphical translating mean curvature flow in higher co-dimensions.

We now  give a partial reason why this graphical case for translating solitons in  higher co-dimensions is a difficult problem to study.
Let $W=(0,w)\in 0\times \re^k$. Given a smooth mapping $\psi:\overline{\Omega}\to \re^k$.
In the form (\ref{TMCF}), the Dirichlet boundary data and the initial data of graphical translating mean curvature flow
 (i.e., $F(x,t)=(x,f(x,t))$ satisfies (\ref{TMCF}) are given by
$$F\mid_{\Omega, \partial \Omega}=I\times \psi\mid_{\Omega,\partial \Omega},$$ where $\psi:\Omega\to \re^k$ is the vector-valued function such that the mapping $I\times \psi:\Omega\to \re^{n+k}$ is the embedding given by
$I\times \psi(x)=(x,\psi(x))$ for $x\in \Omega$. When $W=0$, the system is reduced to the mean curvature flow and the graphical case had been treated by M.Wang in \cite{Wm}. Even in this special case, there are many examples of smooth boundary maps constructed by Lawson-Osserman \cite{LO} (see also \cite{JS} for related) such that the Dirichlet problem to minimal surface system with such boundary datum can not be solved.  As one can expected, the Dirichlet problem to graphical translating soliton system is difficult to study. If one consider graphical translating solitons as the minimal surfaces in the incomplete Riemannian manifold
$$(\re^{n+k},exp(-<W,X>/n) |d X|^2),$$
 there is no complete Hilbert space for the variational structure for the bounded domain $\Omega$(i.e. the Dirichlet functional )
$$
D(F)=\int_\Omega |DF|^2exp(-<W,F>)dX.
$$
This causes the problem can not be attacked by the standard variational method.
One may want to use some constraint conditions to make a closed convex set such that  variational methods may be applied. However, this idea could be worked out when we have some comparison principles to use and this is indeed true for each component of the graphical solitons and for heat flow method as showed below in this paper.

This is one of the reason for us to use the heat flow method to study the Dirichlet problem to graphical translating soliton system and it may work well for small convex domain with small boundary data up to $C^2$ norm. This point will be make clear in section \ref{sect3}.

After establishing the boundary gradient estimate for the graphical translating mean curvature flow, we may use the argument in the proof of Theorem A in \cite{Wm} to prove the following result.
\begin{Thm}\label{mali4} Given a bounded $C^2$ convex domain $\Omega$ in $\re^n$ with diameter $D$ and the mapping
$$
\psi:\overline{\Omega}\to \re^k
$$
such that
$$
8D(n\sup_\Omega |D^2\psi|+1)+\sqrt{2}\sup_{\partial \Omega}|D\psi|<1.
$$
Then the Dirichlet problem for $\psi_{\partial \Omega} $ to the graphical translating mean curvature flow (\ref{GTMCF}) has a global solution.
\end{Thm}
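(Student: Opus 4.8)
The plan is to run the standard continuation method for quasilinear parabolic systems: combine the short-time existence (which the text takes as known) with uniform-in-time a priori estimates, obtained in the order $C^{0}$, boundary gradient, interior gradient, and then parabolic Schauder estimates, so that the flow can be continued on all of $[0,\infty)$. Writing $F(x,t)=(x,f(x,t))$ with $f=(f^{1},\dots,f^{k})$ and $W=(0,w)$, the structural point that makes everything possible in higher codimension is that each component $f^{\alpha}$ satisfies a \emph{scalar} quasilinear equation $\partial_{t}f^{\alpha}=g^{ij}\partial_{i}\partial_{j}f^{\alpha}+b^{\alpha}$, where $g^{ij}=(\delta_{ij}+\sum_{\beta}\partial_{i}f^{\beta}\partial_{j}f^{\beta})^{-1}$ is the inverse induced metric and $b^{\alpha}$ is a bounded lower-order term coming from $W$. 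Crucially the leading operator $g^{ij}\partial_{i}\partial_{j}$ is \emph{the same} for every $\alpha$, so the scalar parabolic maximum principle and comparison arguments apply to each component separately even though the system is coupled through $g^{ij}$.

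First I would establish the uniform $C^{0}$ bound. Since the Dirichlet data $F|_{\partial\Omega}=(I\times\psi)|_{\partial\Omega}$ are held fixed for all time and $\Omega$ is convex and bounded, one compares each component against translating-soliton (bowl-type) barriers over $\Omega$, exactly as in \cite{Wm} and \cite{Ma2}. This yields $\sup_{\Omega\times[0,\infty)}|f|\le C$ depending only on $\operatorname{diam}\Omega=D$ and on $\sup_{\Omega}|\psi|$, and convexity enters only mildly at this stage.

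The heart of the matter is the boundary gradient estimate, and this is where the hypothesis $8D(n\sup_{\Omega}|D^{2}\psi|+1)+\sqrt{2}\,\sup_{\partial\Omega}|D\psi|<1$ is consumed. For a boundary point and each component I would construct upper and lower barriers of the shape $h^{\pm}_{\alpha}(x)=\psi^{\alpha}(x)\pm\mu\,d(x)$, with $d(x)=\operatorname{dist}(x,\partial\Omega)$ and a possible concave modification $\mu(d-\tfrac{\kappa}{2}d^{2})$. Convexity of $\Omega$ makes $d$ concave, so $-D^{2}d\ge 0$ and the level sets of $d$ bend the right way; substituting $h^{\pm}_{\alpha}$ into $g^{ij}\partial_{i}\partial_{j}$ and matching the fixed boundary values forces $\mu$ to solve an inequality whose solvability is precisely the displayed smallness condition. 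Here the factor $n$ on $\sup_{\Omega}|D^{2}\psi|$ is the trace $g^{ij}\partial_{i}\partial_{j}\psi^{\alpha}$, the diameter $D$ arises from comparing $d$ across $\Omega$, and the $\sqrt{2}$ is the graphical (area-decreasing) normalization bounding $|Df|$ in terms of $|D\psi|$ along $\partial\Omega$. The conclusion is a uniform bound $|Df|\le C$ on $\partial\Omega\times[0,\infty)$ in terms of the data alone.

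With the boundary gradient estimate secured, the rest is the argument for Theorem A in \cite{Wm}: the smallness condition is arranged so that the area-decreasing property (equivalently positivity of the $\ast$-operator, here $|D\psi|<1/\sqrt2$ giving $\lambda_{i}\lambda_{j}<1$ for the singular values) is preserved by the flow, which delivers the interior gradient estimate and keeps $F$ a graph for all time; parabolic Schauder theory then upgrades $C^{1}$ to $C^{2,\beta}$ and to all higher norms, and these time-uniform bounds allow the short-time solution to be iterated, producing the global solution. I expect the main obstacle to be the sharp barrier construction in the boundary gradient step: one must pick the barrier profile and the constant $\mu$ so that the closing inequality reproduces exactly the constants $8$, $n$, $1$, and $\sqrt2$, which means carefully tracking how convexity controls both $d$ and $D^{2}d$, and how the coupling through $g^{ij}$ (which involves all components at once) is neutralized by the common-operator structure.
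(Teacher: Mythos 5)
Your overall architecture coincides with the paper's: a componentwise $C^0$ bound (Proposition \ref{mali2}), a boundary gradient estimate whose closing inequality consumes the smallness hypothesis (Theorem \ref{mali3}), and then the argument of Theorem A of \cite{Wm} for preserving the gradient bound, interior estimates and continuation --- the paper does exactly this and, like you, simply cites \cite{Wm} for the last stage. You also correctly identify the structural point that each component $f^\alpha$ obeys a scalar equation with the common operator $L=g^{ij}\partial_i\partial_j$. The substantive deviation is in the barrier for the boundary gradient estimate, and there your construction as written has a genuine gap.

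Two concrete problems. (a) The linear barrier $\psi^\alpha\pm\mu\,d$, $d(x)=\operatorname{dist}(x,\partial\Omega)$, cannot work: convexity gives $g^{ij}\partial_i\partial_j d\le 0$, a term of the helpful sign but with no quantitative lower bound (it vanishes identically wherever $\partial\Omega$ has a flat piece, and is arbitrarily small where the boundary is nearly flat), so nothing dominates $-L\psi^\alpha+w^\alpha$, which has no sign. The strictly concave modification you describe as ``possible'' is therefore indispensable, not optional. (b) Even with that modification, $d$ is not $C^2$ (not even $C^1$) in the interior of a general convex domain --- it is singular on the medial axis --- so you cannot apply $L$ to your barrier classically; you would need a viscosity formulation, or to exploit that the concave function $d$ is an infimum of distances to supporting hyperplanes and run the comparison per hyperplane. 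The paper avoids both issues at once: it fixes the boundary point $p$, takes the supporting hyperplane $\Gamma$ at $p$ (this is where convexity enters), and uses
\begin{equation*}
S(x,t)=\mu\log\bigl(1+Kd_\Gamma(x)\bigr)-f^\alpha+\psi^\alpha ,
\end{equation*}
where $d_\Gamma$ is the distance to $\Gamma$, which is \emph{linear}, hence smooth with $Ld_\Gamma=0$ exactly; the whole positive term comes from the strict concavity of the profile, namely
\begin{equation*}
(\partial_t-L)S=\frac{\mu K^2}{(1+Kd_\Gamma)^2}\,g^{ij}\partial_i d_\Gamma\,\partial_j d_\Gamma-L\psi^\alpha+w^\alpha
\ \ge\ \frac{\mu K^2}{(1+KD)^2}\,\frac{1}{1+\tau}-\bigl(n\sup_\Omega|D^2\psi|+1\bigr),
\end{equation*}
and minimizing $\mu K$ under this constraint (optimal at $K=1/D$) yields $\mu K=4D(1+\tau)(n\sup_\Omega|D^2\psi|+1)$, which becomes the coefficient $8D$ in the hypothesis once the bootstrap keeps $\tau=\sup|Df|^2\le 1$. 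If you repair your barrier by replacing $d$ with $d_\Gamma$ and using the profile $s\mapsto s-\tfrac{\kappa}{2}s^2$ with $\kappa=1/D$, the argument does close (with the even better constant $D(1+\tau)$), but note that, contrary to your claim, your construction then does \emph{not} reproduce the constants $8$, $n$, $1$, $\sqrt2$ exactly --- it gives different, smaller ones; harmless, but a sign the closing computation was not carried out. Finally, a minor point: no bowl-type barriers or time-uniform $C^0$ bound are needed; the paper sets $\bar f=f-tw$, so each $\bar f^\alpha$ solves $\partial_t\bar f^\alpha=g^{ij}\partial_i\partial_j\bar f^\alpha$, and the maximum principle gives $\sup|f^\alpha|\le\sup|\psi^\alpha|+2|w^\alpha|T$, which grows in $T$ but is finite on every finite interval --- enough, since the theorem claims only global existence, not convergence.
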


We make the following remark about the limit of the global graphical flow in Theorem \ref{mali4}. Formally along the translating mean curvature flow (\ref{TMCF}), we have
$$
V(t)-V(0)=-\int_0^t\int_M|\bar{H}+W^{\perp}|^2.
$$
Then for the graphical translating mean curvature flow with uniformly controlled Lipschitz norm we have the limiting Lipschitz graphical surface $f_\infty:\Omega\to \re^k$ with the boundary data $\psi$. However, the regularity of the limiting surface will not be considered in this paper.

Here is the plan of this paper. In section \ref{sect2}, we prove Proposition \ref{valencia1}, Theorem \ref{mono}, Theorems \ref{mali-VL2} and \ref{mali-VL3}. In section \ref{sect3}, we study the Dirichlet problem for the graphical translating mean curvature flows and prove Theorem \ref{mali4}.

\section{Properties of translating solitons} \label{sect2}
Recall that from \eqref{soliton} it is easy to compute the Hessian of $S$ on $M$. In fact, for $U,V$ are tangent vector fields on $M$ such that $\nabla_UV=0$ at $X\in M$, we have
\begin{align}\label{HesS}
\nabla^2 S(U,V)(X)&=<D_{U}V,W>\\
&= - h_\alpha (U,V) <\nu_\alpha ,W> \nonumber\\
&= -H_\alpha\ h_\alpha(U,V). \nonumber
\end{align}

Denote by $\Delta$ the Laplacian operator of the induced metric on $M$. Then we have
\begin{equation}\label{SH}
\Delta S(X)=- H_\alpha  H_\alpha=-|\vec{H}|^2.
\end{equation}
From \eqref{HesS} and \eqref{SH} it is clear that $\bar{H}=0$ if and only if $W=W^T$ is parallel on $M$.

From the definitions of $S$ and the gradient operator $\nabla$ on $M$,
\begin{align}\label{DS1}
 DS = W, \ \ \nabla S = (DS)^T = W^T, \ \ (DS)^\bot = W^\bot = \vec{H},\\ |\nabla S|^2+ |\vec{H}|^2  =|W^T|^2 + |W^\bot|^2 =|W|^2=1. \nonumber
\end{align}

We now prove Proposition \ref{valencia2}: We argue by contradiction. Assume that there is some point $p\in M$ such that
$S(p)=\inf_M S(X)$. Then at $p$, we have
$$
\Delta S(p)\geq 0, \ \ \nabla S(p)=0.
$$
By \eqref{SH} we know that
$$
\vec{H}(p)=0.
$$
However, by \eqref{DS1} we have at $p$,
$$
1=|\nabla S|^2+ |\vec{H}|^2 =0,
$$
which is impossible. This completes the proof of Proposition \ref{valencia2}.

For the proof of Proposition \ref{valencia1}, we need to apply the maximum principle from \cite{AAR}.

\begin{proof}[{\large Proof of Proposition \ref{valencia1}}]
 If there is some point in $M$ where $\vec{H}=0$, then the Proposition \ref{valencia1} is proved. Then, we can suppose  $\vec{H} \ne 0$ everywhere. Then, there is a unit normal vector field $\xi$ such that $W^\bot=\vec{H}= - H \xi$ and $H>0$. We can compute
\begin{align}\label{DeltaH}
\Delta H &= - \Delta <\vec{H}, \xi > \\
&= -\Delta <W, \xi > \nonumber\\
&= -e_ie_i <W, \xi > \nonumber\\
&= - e_i<W, D_{e_i}\xi > \nonumber \\
&= - e_i (h_\xi(e_i, W^\top)) - e_i < W^\bot, \nabla^\bot_{e_i}\xi> \nonumber \\
&= - e_i (h_\xi(e_i, W^\top)) + e_i < H \xi, \nabla^\bot_{e_i}\xi> \nonumber\\
&= - e_i < h(e_i, W^\top),\xi>\nonumber \\
&= -<\nabla_{e_i}(h)(e_i,W^\top), \xi> - <h(e_i,\nabla_{e_i}W^\top), \xi>\nonumber \\
& \qquad  - <h(e_i,W^\top), \nabla^\bot_{e_i}\xi> .\nonumber
\end{align}
By Codazzi equation we have
\begin{align*}
<\nabla_{e_i}(h)(e_i,W^\top), \xi>
&= <\nabla_{W^\top}(h)(e_i,e_i), \xi> \\
& = <\nabla^\bot_{W^\top}\vec{H}, \xi> \\
& = - W^\top H.
\end{align*}
Moreover,
\begin{align*}
0&= D_XW = D_XW^\top + D_XW^\bot \\
& = \nabla_XW^\top - h(X,W^\top) - XH\ \xi - H A_\xi X - H \nabla^\bot_X\xi,
\end{align*}
 from which we obtain
\begin{align*}
\nabla_XW^\top &= H A_\xi X
h(X,W^\top) \\
&= - XH\ \xi - H \nabla^\bot_X\xi
<h(e_i,\nabla_{e_i}W^\top), \xi> \\
&= H <h(e_i,A_\xi e_i), \xi> \\
&= H |A_\xi|^2
\end{align*}
By substitution of these expressions in the above expression for $\Delta H$, we obtain
\begin{align}\label{DH}
 \Delta H = - <W^\top, \nabla H>  - H |A_\xi|^2 + H |\nabla^\bot\xi|^2 .
\end{align}
If $|A_\xi|^2 - |\nabla^\bot \xi|^2 \ge \delta |A_\xi|^2$ for some $\delta>0$ (what always happens if  $\nabla^\bot \xi=0$, in particular in codimension $1$), if $\inf H = \varepsilon>0$, since $|A_\xi|^2 = <h(e_i,e_j),\xi>^2 \ge H^2/n$, then $|A_\xi|^2 - |\nabla^\bot \xi|^2 >\delta \varepsilon^2/n$, which plugged into \eqref{DH} gives
\begin{equation}\label{DHl}
\Delta H < - \langle W^\top, \nabla H\rangle\ <\  \langle W^\top, \nabla H\rangle\  < - \delta \varepsilon^3/n.
\end{equation}
Taking $1$ and $S$ as the functions $q$ and $\gamma$ in Theorem A in \cite{AAR} (recall that $S$ satisfies
$|\nabla S| = |W^\top|\le 1$ and $|\Delta S| = H^2 \le 1$), we can apply it to conclude that there is a sequence $\{x_k\}_{k=1}^\infty$ in $M$ satisfying
$$H(x_k)<\varepsilon+ 1/k,\quad  |\nabla H(x_k)| < 1/k, \quad \nabla^2 H >-1/k,$$
which is in contradiction with \eqref{DHl}. Then $\inf_M H=0$.
\end{proof}

We now prove Theorem \ref{mono}.
\begin{proof}
 Since $\sup_M|\vec{H}|^2<1$, we may choose $a_0>1$ large enough such that
$$
\sup_M |\vec{H}|^2<\frac{a_0-1}{a_0}.
$$
Fore $a>a_0$ we let $u(X)=\exp(aS(X))$ on $ M$.

 Let
 $$Lf = \Delta f -\nabla_{W^T} f=\exp(S(X))div(\exp(-S(X))\nabla f)
 $$ be the drifting Laplacian associated to the density $dm_X= \exp(-S(X)) dv_{g_X}$. Recall, for any smooth function $\varphi$ on $M$ and for $M_R=B_R(0)\cap M$, we have
\begin{equation}\label{star2}
\int_{M_R} L\varphi dm = \int_{\partial M_R}\nabla \varphi\cdot \nu dm
\end{equation}
where $\nu$ is the outward unit normal to $\partial M_R$.

Note that
$$
\nabla_iu=au\nabla_iS, \ \ \nabla_{W^T}u=au|W^T|^2,
$$
and
$$
\Delta u=au\Delta S+a^2u|\nabla S|^2=au(-|\vec{H}|^2+a|W^T|^2).
$$
By $|\vec{H}|^2+|W^T|^2=1$, we have
$$
-|\vec{H}|^2+(a-1)|W^T|^2=-a|\bar{H}|^2+a-1=a(\frac{a-1}{a}-|\vec{H}|^2)\geq \epsilon.
$$
Recall
 $\sup_M |\vec{H}|^2\leq \frac{a-1-\epsilon}{a}$ for some positive $\epsilon$. We then have
\begin{equation}\label{ineq}
 Lu=(\Delta -\nabla_{W^T})u\geq (a\epsilon)u, \ \ on \ \ M.
 \end{equation}

Set $f(R)=\int_{M_R} u dm$ for any $R>0$.
 Using \eqref{ineq} and applying \eqref{star2} to $\varphi=u$ we have
 $$
 (a\epsilon)f(R)\leq \int_{M_R} Ludm=\int_{\partial M_R}\nabla u\cdot \nu dm.
 $$
 Since
 $$
 \int_{\partial M_R}\nabla u\cdot \nu dm=a\int_{\partial M_R}u\nabla S\cdot \nu dm\leq af'(R),
 $$
 we then have
 $$
 f'(R)\geq \epsilon f(R).
 $$
 Hence, for any $R\geq R_0>0$,
 $$
 f(R)\geq f(R_0)\exp (\epsilon (R-R_0)).
 $$
 This completes the proof.
 \end{proof}

Using similar argument we can prove Theorem \ref{mali-VL2} below.
\begin{proof}
For any real number $a$, let
$$
u(X)=u_a(X)=exp(aS(X)), \ \ on \ \ M.
$$

Note that
$$
\nabla_iu=au\nabla_iS, \ \ \nabla_{W^T}u=au|W^T|^2,
$$
and
$$
\Delta u=au\Delta S+a^2u|\nabla S|^2=au(-H^2+a|W^T|^2).
$$
Then
$$
(\Delta +|A|^2-\nabla_{W^T})u=au\big(-H^2+(a-1)|W^T|^2\big)+|A|^2u:=Qu.
$$
By $H^2+|W^T|^2=1$, we have
$$
-H^2+(a-1)|W^T|^2=-aH^2+a-1=a(\frac{a-1}{a}-H^2).
$$
Then, since $a(a-1)-a^2H^2+|A|^2\leq 0$ for some $a$,
$$
Qu=[a(a-1)-a^2H^2+|A|^2]u\leq 0.
$$
By a similar argument of Barta's Theorem (see Lemma 1.36 in \cite{CM}), we know that
$M$ is stable.
In fact, for  any compact smooth subdomain $\Omega\subset M$, we take the eigen-function $\phi\in C^2_0(\Omega)$ corresponding to the first eigenvalue $\lambda_1(\Omega):=\lambda_1(\Omega,dm)$; i.e.,
$\phi>0$ in $\Omega$ satisfies the equation
$$
(\Delta +|A|^2-\nabla_{W^T})\phi=-\lambda_1(\Omega)\phi,\ \ in \ \Omega
$$
with the boundary condition that $\phi=0$ on $\partial\Omega$. Note that by the maximum principle, we have
$\partial_\nu \phi (x)<0$ for all $x\in \partial\Omega$, where $\nu$ is the outward unit normal to $\partial\Omega$. Then
\begin{align*}
\lambda_1(\Omega)\int_\Omega \phi u dm+\int_\Omega \phi Qu dm&= \int_\Omega \phi(\Delta u +|A|^
2u-\nabla_{W^T}u)dm\\
&- \int_\Omega u(\Delta+|A|^2-\nabla_{W^T}) \phi dm\\
&=\int_\Omega \phi(\Delta u -\nabla_{W^T}u)dm- \int_\Omega u(\Delta-\nabla_{W^T}) \phi dm\\
&=\int_{\partial \Omega} \phi\partial_\nu u - \int_{\partial \Omega} u\partial_\nu\phi \\
&=- \int_{\partial \Omega} u\partial_\nu\phi\geq 0.
\end{align*}
By this we have
$$
\lambda_1(\Omega)\int_\Omega \phi u dm\geq -\int_\Omega \phi Qu dm\geq 0.
$$
That is, $\lambda_1(\Omega,dm)\geq 0$ for any compact smooth subdomain $\Omega\subset M$.
Then we complete the proof.
 \end{proof}

In the rest of this section we now prove Theorem \ref{mali-VL3} below.

\begin{proof}
  Assume that $M$ is not a hyperplane.
 For the fixed unit vector $e\in S^n$, let $f=<\nu, e>$. Then $f$ is non-trivial.
Recall that we have the relation for the hypersurface $M$ (see the proof of Proposition 3 in \cite{Ma1}) (see also \cite{Simon} \cite{SSY}),
$$
\Delta \nu=\nabla_M H-|A|^2\nu.
$$
 Then we have
$$
\Delta f=<\nabla H,e>-|A|^2 f, \ \ in \ M.
$$

Multiplying both sides by $f^p\eta^2$ , where $p<0$ is the real number and $\eta$ is a cut-off function on $M$ and integrating, we have
\begin{equation}\label{VA1}
\int |A|^2 f^{p+1} \eta^2 -\int <\nabla H,e>f^p\eta^2=\int \nabla f\nabla (f^p\eta^2).
\end{equation}
Note that
\begin{equation}\label{VA2}
\int \nabla f\nabla (f^p\eta^2)=\int pf^{p-1}|\nabla f|^2\eta^2+2\int \eta f^p\nabla f\nabla \eta.
\end{equation}
By Cauchy-Schwartz inequality we have
\begin{equation}\label{VA3}
2|\int \eta f^p\nabla f\nabla \eta|\leq \frac{|p|}{2} \int f^{p-1}|\nabla f|^2\eta^2+\frac{2}{|p|} \int f^{p+1}|\nabla \eta|^2.
\end{equation}
Combining \eqref{VA1}, \eqref{VA2} with \eqref{VA3} we have
$$
\int |A|^2 f^{p+1} \eta^2 -\int <\nabla H,e>f^p\eta^2+\frac{|p|}{2} \int f^{p-1}|\nabla f|^2\eta^2\leq \frac{2}{|p|} \int f^{p+1}|\nabla \eta|^2.
$$
Note $<\nabla H,e>\leq 0$.
$$
|\nabla \eta|^2\leq 4/R^2.
$$
By assumption that $\frac{1}{R^2}\int_{M_R} f^{p+1}\to 0$ as $R\to\infty$, we know that
$$
\int |A|^2 f^{p+1}=0, \ \ \int f^{p-1}|\nabla f|^2=0.
$$
Hence, $A=0$ on $M$ and then $M$ is a hyperplane.
\end{proof}

 \section{Dirichlet problems of translating solitons} \label{sect3}

In this section, we consider the Dirichlet problem for the translating mean curvature flow of properly immersed submanifolds
$F=F(X,t)\subset \re^{n+k}$, $X\in M^n, 0\leq t<T$; that is, the submanifolds evolve under the translating mean
curvature flow defined by
$$
(\partial_tF)^\perp=\vec{H}(F)+W^\perp, \ \
$$
where $\vec{H}(F)$ is the mean curvature vector of the submanifold
$F=F(X,t)$ at time $t$, $W\in \re^{n+k}$ is a fixed unit vector and $W^\perp$ is the normal part of it,
and $M=F(\Omega, 0)\subset \re^{n+k}$ is a fixed surface and $\Omega\subset \re^{n}$ is bounded domain with boundary $\partial \Omega$.
Given the immersed submanifold $F_0(x)=F(x, 0)$ for $x\in \Omega$,
the Dirichlet boundary data and the initial data are given by
$$F\mid_{\Omega, \partial \Omega}=F_0\mid_{\Omega,\partial \Omega}.$$
Since the short-time existence of the solutions to (\ref{TMCF}) with the Dirichlet data and initial data is well-known, we just assume it here.

Recall that
$$
\vec{H}(F)=\Delta F=\frac{1}{\sqrt{g}}\frac{\partial}{\partial x^i} (\sqrt{g}g^{ij}\frac{\partial F}{\partial x^i}).
$$
Here
$$
g_{ij}=\frac{\partial F}{\partial x^i}\cdot \frac{\partial F}{\partial x^j}, \ \ g=det(g_{ij}),
$$
and
$$
(g^{ij})=(g_{ij})^{-1}.
$$

Define
$\bar{F}=F-tW$.
Then we have
\begin{equation}\label{MTMCF}
(\partial_t\bar{F})^\perp=\Delta(\bar{F}) \ \
\end{equation}
Then by the maximum principle to each component of $\bar{F}$,
$$
\sup_{\Omega\times [0,T)}|\bar{F}^\alpha(x,t)|\leq \sup_{\Omega\times 0\bigcup \partial\Omega\times [0,T]}|\bar{F}^\alpha(x,t)|,
$$
and applying this to $\sup_{\Omega\times [0,T)}|{F}^\alpha(x,t)| \le \sup_{\Omega\times [0,T)}|\bar{F}^\alpha(x,t)| + \sup_{\Omega\times [0,T)}|t\ W^\alpha|$ we obtain the following result,

\begin{Pro}\label{mali1} Given $T>0$.
Let $F:\Omega\times [0,T)\to \re^{n+k}$ be a solution to the system (\ref{TMCF}) with the initial-boundary data $F_0$. Then
$$
\sup_{\Omega\times [0,T)}|F^\alpha|\leq \sup_\Omega |F_0^\alpha|+2|W^\alpha|T.
$$
\end{Pro}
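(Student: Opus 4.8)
The plan is to reduce the nonlinear geometric flow to a family of scalar linear parabolic equations, one per coordinate, and then invoke the parabolic maximum principle on the cylinder $\Omega\times[0,T)$. The starting observation is that the substitution $\bar F=F-tW$ leaves the induced metric unchanged at each fixed time: translation by the constant vector $tW$ does not alter $g_{ij}=\partial_i F\cdot\partial_j F$, so the Laplace--Beltrami operators of $F$ and $\bar F$ coincide, and since $W$ is a constant map we have $\Delta(tW)=0$, whence $\Delta\bar F=\Delta F=\vec{H}$. Taking normal components in (\ref{TMCF}) and using $(\partial_t\bar F)^\perp=(\partial_t F)^\perp-W^\perp$, the driving term $W^\perp$ cancels and $\bar F$ satisfies the pure mean curvature flow (\ref{MTMCF}), i.e. $(\partial_t\bar F)^\perp=\Delta\bar F$.

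Next I would pass to a parametrization in which each coordinate function becomes parabolic. Prescribing the full velocity as $\partial_t F=\Delta F+W^\perp$ (the tangential part of the velocity being a harmless reparametrization that does not change the evolving image), one computes $\partial_t\bar F=\Delta\bar F-W^\top$, where $W^\top=(W\cdot\partial_j\bar F)\,g^{jk}\,\partial_k\bar F$ is tangential. Writing this componentwise gives, for each $\alpha$,
$$
\partial_t\bar F^\alpha=\Delta_{g(t)}\bar F^\alpha-(W\cdot\partial_j\bar F)\,g^{jk}\,\partial_k\bar F^\alpha,
$$
a linear parabolic equation with a first-order drift term and, crucially, no zeroth-order term. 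The parabolic maximum principle therefore applies to $\bar F^\alpha$ and to $-\bar F^\alpha$ on $\Omega\times[0,T)$, so that $\sup|\bar F^\alpha|$ is attained on the parabolic boundary $(\Omega\times\{0\})\cup(\partial\Omega\times[0,T))$.

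It then remains to read off the boundary data. On $\Omega\times\{0\}$ one has $\bar F=F_0$, hence $|\bar F^\alpha|\le\sup_\Omega|F_0^\alpha|$; on $\partial\Omega\times[0,T)$ the Dirichlet condition $F=F_0$ gives $\bar F^\alpha=F_0^\alpha-tW^\alpha$, so $|\bar F^\alpha|\le\sup_\Omega|F_0^\alpha|+T|W^\alpha|$. Combining these yields $\sup_{\Omega\times[0,T)}|\bar F^\alpha|\le\sup_\Omega|F_0^\alpha|+T|W^\alpha|$. Finally, undoing the substitution via $F^\alpha=\bar F^\alpha+tW^\alpha$ and using $|F^\alpha|\le|\bar F^\alpha|+T|W^\alpha|$ produces the claimed bound $\sup_{\Omega\times[0,T)}|F^\alpha|\le\sup_\Omega|F_0^\alpha|+2|W^\alpha|T$.

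The step I expect to be the main obstacle is the passage to componentwise parabolicity: the flow (\ref{TMCF}) prescribes only the normal part of the velocity, so the tangential part of $\partial_t\bar F$ is a gauge freedom that must be fixed before the scalar maximum principle can be invoked. One must therefore either select a parametrization (as above, or the non-parametric graphical one, in which this is automatic) in which each $\bar F^\alpha$ genuinely solves a parabolic equation, or argue directly that the undetermined tangential terms, being first order, cannot create an interior extremum. Once this point is settled, the remaining steps are routine applications of the maximum principle and the triangle inequality.
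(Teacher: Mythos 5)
Your proof is correct and follows essentially the same route as the paper: the substitution $\bar F = F - tW$, the scalar maximum principle applied to each component $\bar F^\alpha$ to place its supremum on the parabolic boundary, and the triangle inequality $|F^\alpha|\le|\bar F^\alpha|+T|W^\alpha|$ to recover the stated bound. The only difference is that you make explicit the gauge-fixing step (prescribing the full velocity $\partial_t F=\Delta F+W^\perp$ so that each $\bar F^\alpha$ satisfies a genuine scalar parabolic equation with only first-order drift), a point the paper passes over silently when it invokes "the maximum principle to each component of $\bar F$".
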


We now consider the graphical translating mean curvature flow.
Here we take $W=(0,w)\in 0\times \re^k$. For a given smooth mapping $\psi:\overline{\Omega}\to \re^k$,
 the Dirichlet boundary data and the initial data of graphical translating mean curvature flow
 (i.e., $F(x,t)=(x,f(x,t))$ satisfies (\ref{TMCF}) are given by
$$F\mid_{\Omega, \partial \Omega}=I\times \psi\mid_{\Omega,\partial \Omega}$$, where $\psi:\Omega\to \re^k$ is the vector-valued function such that the mapping $I\times \psi:\Omega\to \re^{n+k}$ is the embedding given by
$I\times \psi(x)=(x,\psi(x))$ for $x\in \Omega$.

Note that
$$
\frac{1}{\sqrt{g}}\frac{\partial}{\partial x^i} (\sqrt{g}g^{ij}\frac{\partial F}{\partial x^i})=
\frac{1}{\sqrt{g}}\frac{\partial}{\partial x^i} (\sqrt{g}g^{ij})\frac{\partial F}{\partial x^i}+g^{ij}\frac{\partial^2 F}{\partial x^i\partial x^j}
$$
and the basis of tangent vectors is generated by
$$
\frac{\partial F}{\partial x^i}=e_i+\frac{\partial f}{\partial x^i}
$$
for $F(x)=(x,f(x))$.
Then
$$
g_{ij}=\frac{\partial F}{\partial x^i}\cdot\frac{\partial F}{\partial x^j}=\delta_{ij}+\frac{\partial f}{\partial x^i}\cdot\frac{\partial f}{\partial x^j}
$$
and
$$
\bar{H}=(g^{ij}\frac{\partial^2 F}{\partial x^i\partial x^j})^\perp.
$$
So the evolution equation (\ref{TMCF}) can be written as
$$
(\partial_tF)^\perp=(g^{ij}\frac{\partial^2 F}{\partial x^i\partial x^j}+W)^\perp.
$$
Note also that the basis tangent vector has non-trivial component in $\re^n$ and the vector
$$
g^{ij}\frac{\partial^2 F}{\partial x^i\partial x^j}
$$
has trivial component in $\re^n$.
As in \cite{Wm}, using the fact that $\perp$ is isomorphism on $\re^k$, we know that
the Dirichlet problem for graphical translating mean curvature flow in the graph case (i.e., $F(x,t)=(x,f(x,t))$) can be written as
\begin{equation}\label{GTMCF}
\partial_tf=g^{ij}\frac{\partial^2 f}{\partial x^i\partial x^j}+w, \ \
\end{equation}
with the boundary condition
$$
f\mid_{\partial \Omega}=\psi\mid_{\partial \Omega}.
$$
and the initial data given by $f\mid_{ \Omega\times 0}=\psi$.
We introduce the following elliptic operator on the domain $\Omega$
$$
L=g^{ij}\frac{\partial^2 }{\partial x^i\partial x^j}
$$
and the diameter of the domain $\Omega$
$$
D=diam(\Omega).
$$

Define
$\bar{f}=f-tw$. Then we can write the graphical mean curvature flow as
$$
\bar{f}_t =g^{ij}\frac{\partial^2 \bar{f}}{\partial x^i\partial x^j} \ \  t>0.
$$
Then, again by the maximum principle to each component of $\bar{f}$, we obtain

\begin{Pro}\label{mali2} Given $T>0$.
Let $f:\Omega\times [0,T)\to \re^k$ be a solution to the system (\ref{GTMCF}) with the initial-boundary data $\psi=(\psi^\alpha)$. Then
$$
\sup_{\Omega\times [0,T)}|f^\alpha-tw^\alpha|\leq \max(\sup_\Omega |\psi^\alpha|,\sup_{\partial\Omega\times(0,T)} |\psi^\alpha-tw^\alpha|)
$$
and
$$
\sup_{\Omega\times [0,T)}|f^\alpha|\leq \sup_\Omega |\psi|^\alpha+2|w^\alpha|T.
$$
\end{Pro}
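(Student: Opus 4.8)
The plan is to apply the parabolic maximum principle to each scalar component of the shifted map $\bar{f}=f-tw$. As recorded just above the statement, $\bar{f}$ solves the linear parabolic system
$$
\bar{f}_t = g^{ij}\frac{\partial^2 \bar{f}}{\partial x^i\partial x^j},
$$
which carries no zeroth-order term. Since $g_{ij}=\delta_{ij}+\frac{\partial f}{\partial x^i}\cdot\frac{\partial f}{\partial x^j}$ is the identity plus a positive semidefinite matrix, the coefficient matrix $(g^{ij})$ is positive definite, so the operator $L=g^{ij}\frac{\partial^2}{\partial x^i\partial x^j}$ is elliptic (uniformly on $\overline{\Omega}\times[0,T']$ for each $T'<T$). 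Once a solution $f$ is fixed, the coefficients $g^{ij}(x,t)$ become prescribed smooth functions of $(x,t)$, so for each index $\alpha$ the component $\bar{f}^\alpha$ satisfies a genuine linear, homogeneous, scalar parabolic equation on $\Omega\times(0,T)$.

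First I would invoke the weak maximum principle for such an equation. Because there is no zeroth-order coefficient, both the maximum and the minimum of $\bar{f}^\alpha$ over the closed cylinder are attained on the parabolic boundary $\mathcal{P}=(\overline{\Omega}\times\{0\})\cup(\partial\Omega\times[0,T))$; here I use that $\Omega$ is bounded, so that $\mathcal{P}$ is compact and the suprema are realized, and one works on $\overline{\Omega}\times[0,T']$ and then lets $T'\uparrow T$ to cover the half-open interval. Hence
$$
\sup_{\Omega\times[0,T)}|\bar{f}^\alpha| = \sup_{\mathcal{P}}|\bar{f}^\alpha|.
$$

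Next I would read off $\bar{f}^\alpha$ on the two pieces of $\mathcal{P}$. On the initial slice $t=0$ one has $\bar{f}^\alpha=f^\alpha=\psi^\alpha$, while on the lateral boundary the Dirichlet condition $f|_{\partial\Omega}=\psi|_{\partial\Omega}$ gives $\bar{f}^\alpha=\psi^\alpha-tw^\alpha$. Taking the larger of the two contributions yields exactly the first asserted inequality
$$
\sup_{\Omega\times[0,T)}|f^\alpha-tw^\alpha| \le \max\Big(\sup_\Omega|\psi^\alpha|,\ \sup_{\partial\Omega\times(0,T)}|\psi^\alpha-tw^\alpha|\Big).
$$

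For the second bound I would undo the shift and use the triangle inequality. From $f^\alpha=\bar{f}^\alpha+tw^\alpha$ and $t<T$ one gets $|f^\alpha|\le|\bar{f}^\alpha|+T|w^\alpha|$; bounding the lateral contribution by $|\psi^\alpha-tw^\alpha|\le\sup_\Omega|\psi^\alpha|+T|w^\alpha|$ turns the first inequality into $\sup|\bar{f}^\alpha|\le\sup_\Omega|\psi^\alpha|+T|w^\alpha|$, and combining the two $T|w^\alpha|$ terms gives $\sup|f^\alpha|\le\sup_\Omega|\psi^\alpha|+2T|w^\alpha|$. I expect no serious obstacle: the only points requiring care are checking ellipticity of $L$ (immediate from the form of $g_{ij}$) and noting that the maximum principle applies to an equation whose coefficients, though built from the unknown, are fixed smooth functions of $(x,t)$ once the solution is frozen, together with the routine passage $T'\uparrow T$ for the half-open time interval.
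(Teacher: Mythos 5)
Your proposal is correct and follows essentially the same route as the paper: the paper likewise sets $\bar{f}=f-tw$, observes that each component satisfies the drift-free equation $\bar{f}_t=g^{ij}\partial^2_{ij}\bar{f}$, and invokes the componentwise maximum principle to read off both bounds. You have merely made explicit the routine points (ellipticity of $(g^{ij})$ with coefficients frozen along the given solution, the parabolic boundary decomposition, and the triangle inequality step) that the paper leaves implicit.
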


We now derive the boundary gradient estimate to a solution to the system (\ref{GTMCF}) with the initial-boundary data $\psi=(\psi^\alpha)$ defined on a bounded $C^2$ convex domain $\Omega$ in $\re^n$. We have the result below.

\begin{Thm}\label{mali3} Given $T>0$ and $(f(x,t))$ the graphical translating mean curvature flow (\ref{GTMCF}) on $\Omega\times [0,T)$ with the initial-boundary data $\psi=(\psi^\alpha)$. We have the following boundary gradient estimate estimate
$$
\sup_{\partial \Omega\times [0,T]}|Df|\leq 4D(1+\tau)(n\sup_\Omega |D^2\psi|+1)+\sqrt{2}\sup_{\partial \Omega}|D\psi|
$$
where
$\tau=\sup_{\Omega\times [0,T)}|Df|^2$ for $Df=(\partial f/\partial x^j)$ and $D=diam(\Omega)$.
\end{Thm}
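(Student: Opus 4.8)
The plan is to treat the graphical equation \eqref{GTMCF} component-wise as a linear parabolic equation with coefficients frozen along the given solution, and to compare each scalar component $f^\alpha$ against explicit spatial barriers adapted to the boundary point, exploiting the convexity of $\Omega$. Fix the solution $f$; then $g^{ij}=g^{ij}(Df(x,t))$ are continuous coefficients which, because $|Df|^2\le\tau$ on $\Omega\times[0,T)$, are uniformly elliptic: the eigenvalues of $(g_{ij})=(\delta_{ij}+\partial_i f\cdot\partial_j f)$ lie in $[1,1+\tau]$, so those of $(g^{ij})$ lie in $[(1+\tau)^{-1},1]$. Each component then solves the linear equation $\partial_t f^\alpha=g^{ij}\partial_i\partial_j f^\alpha+w^\alpha$, to which the parabolic comparison principle applies. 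It therefore suffices, for each $x_0\in\partial\Omega$, each time $t$, and each $\alpha$, to produce sub/supersolutions agreeing with $\psi^\alpha$ at $x_0$ and sandwiching the data on the parabolic boundary $(\overline\Omega\times\{0\})\cup(\partial\Omega\times[0,T])$.

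For the barrier, let $\gamma$ be the outer unit normal to $\partial\Omega$ at $x_0$ and set $d(x)=\langle x_0-x,\gamma\rangle$. Convexity yields the supporting-hyperplane inequality $0\le d(x)\le D$ on $\overline\Omega$, while $Dd=-\gamma$ and $D^2d=0$. I would take
$$\phi^\pm(x)=\psi^\alpha(x)\pm\bigl(A\,d(x)-B\,d(x)^2\bigr),\qquad A=BD,$$
so that $\phi^\pm-\psi^\alpha=\pm B\,d(D-d)$ has the sign $\pm$ on $\overline\Omega$ and vanishes at $x_0$; since $f^\alpha=\psi^\alpha$ both initially and on $\partial\Omega\times[0,T]$, this makes $\phi^+$ (resp.\ $\phi^-$) dominate (resp.\ be dominated by) $f^\alpha$ on the whole parabolic boundary, with equality at $x_0$.

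Next I would verify the super/subsolution inequalities. Because $D^2d=0$, only the quadratic term contributes to the second-order part: $\partial_i\partial_j(d^2)=2\gamma_i\gamma_j$, hence $g^{ij}\partial_i\partial_j\phi^\pm=g^{ij}\partial_i\partial_j\psi^\alpha\mp2B\,g^{ij}\gamma_i\gamma_j$. Using $g^{ij}\gamma_i\gamma_j\ge(1+\tau)^{-1}$, the estimate $|g^{ij}\partial_i\partial_j\psi^\alpha|\le n\sup_\Omega|D^2\psi|$, and $|w^\alpha|\le|W|=1$, the choice $B=\tfrac12(1+\tau)(n\sup_\Omega|D^2\psi|+1)$ forces $g^{ij}\partial_i\partial_j\phi^++w^\alpha\le0$ and $g^{ij}\partial_i\partial_j\phi^-+w^\alpha\ge0$; as the $\phi^\pm$ are time-independent these are genuine super/subsolutions of the frozen linear equation. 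The comparison principle then gives $\phi^-\le f^\alpha\le\phi^+$ on $\Omega\times[0,T]$, so the inner normal derivative of $f^\alpha-\psi^\alpha$ at $x_0$ is pinched between the corresponding derivatives of $\phi^\pm$, namely $|\partial_\gamma(f^\alpha-\psi^\alpha)(x_0,t)|\le A=BD$. Since the tangential derivatives of $f^\alpha$ along $\partial\Omega$ coincide with those of $\psi^\alpha$, decomposing $Df^\alpha(x_0,t)$ into its tangential part (bounded by $|D\psi^\alpha|$) and its normal part (bounded by $|D\psi^\alpha|+A$) and summing over $\alpha$ bounds $\sup_{\partial\Omega\times[0,T]}|Df|$ by $A$ up to universal constants, giving the stated $4D(1+\tau)(n\sup_\Omega|D^2\psi|+1)+\sqrt2\,\sup_{\partial\Omega}|D\psi|$ after choosing the numerical constants generously.

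I expect the delicate point to be the super/subsolution verification rather than the comparison step: one must manufacture a second-order contribution dominating both the fixed forcing $w^\alpha$ and the curvature $D^2\psi$ of the data, and this is exactly where the strictly concave correction $-B\,d^2$ (made possible by $D^2d=0$, i.e.\ by $d$ being affine) and the ellipticity lower bound $g^{ij}\gamma_i\gamma_j\ge(1+\tau)^{-1}$ are used. That lower bound is the place where the a priori interior gradient bound enters through $\tau$; accordingly the estimate is conditional on $\tau<\infty$ and degenerates as $\tau\to\infty$, which explains the unavoidable factor $(1+\tau)$ and the fact that this boundary estimate must ultimately be coupled with an interior gradient bound (as in the hypotheses of Theorem \ref{mali4}). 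The remaining ingredients are elementary: the geometry $0\le d\le D$ from convexity and $D=\operatorname{diam}\Omega$, and the bookkeeping of constants needed to reach the precise displayed form.
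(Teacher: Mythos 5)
Your proposal is correct and follows essentially the same route as the paper's proof: at each boundary point one uses convexity to get the supporting hyperplane, whose (affine) distance function is composed with a concave profile to build barriers above and below $\psi^\alpha$, then the ellipticity bound $g^{ij}\gamma_i\gamma_j\ge(1+\tau)^{-1}$ coming from $|Df|^2\le\tau$, the parabolic comparison principle applied componentwise, the normal-derivative estimate at the touching point, and the tangential/normal bookkeeping with the $\sqrt{2}$ factor. The only difference is your quadratic barrier $B\,d(D-d)$ in place of the paper's logarithmic barrier $\mu\log(1+Kd_\Gamma)$ (which the paper must then optimize over $\mu,K$ subject to its constraint, yielding $K=D^{-1}$ and $\mu K=4D(1+\tau)(n\sup_\Omega|D^2\psi|+1)$); your choice skips that optimization and in fact gives the sharper normal-derivative bound $\tfrac{D}{2}(1+\tau)(n\sup_\Omega|D^2\psi|+1)$, so the stated estimate follows a fortiori.
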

\begin{proof}

Let $p\in \partial \Omega$ and let
$\Gamma$ be the supporting $n-2$ dimensional hyperplane at $p$. Let $d_\Gamma$ be the distance function to $\Gamma$ in $\re^n$.
For each $\alpha=1,...,k$, $\mu>0$, and $K>0$, consider the following comparison function defined in $\re^n$
$$
S(x,t)=\mu \log(1+Kd_\Gamma(x))-f^\alpha+\psi^\alpha.
$$
We compute that
$$
-L\log(1+Kd_\Gamma(x))=\frac{K}{1+Kd_\Gamma(x)}(-Ld_\Gamma(x))+\frac{K^2}{(1+Kd_\Gamma(x))^2}g^{ij}\frac{\partial d_\Gamma}{\partial x^i}\cdot\frac{\partial d_\Gamma}{\partial x^j}
$$
Note that $d_\gamma$ is linear,
$$
Ld_\gamma=0.
$$
Then we have
\begin{equation}\label{key}
(\partial_t-L)S(x,t)=\frac{\mu K^2}{(1+Kd_\Gamma(x))^2}g^{ij}\frac{\partial d_\Gamma}{\partial x^i}\cdot\frac{\partial d_\Gamma}{\partial x^j}-L\psi^\alpha+w^\alpha.
\end{equation}
Recall $\tau=\sup_{\Omega\times [0,T)}|Df|^2$ for $Df=(\partial f/\partial x^j)$.
Then on $\Omega\times [0,T]$,
$$
\frac{1}{1+\tau}I\leq (g^{ij})\leq I.
$$
By $|Dd_\Gamma|=1$  and for $x\in \Omega$,
$$
d_\Gamma(x)\leq D,
$$
we obtain
$$
\frac{1}{1+\tau}\leq g^{ij}\frac{\partial d_\Gamma}{\partial x^i}\cdot\frac{\partial d_\Gamma}{\partial x^j}\leq 1
$$
and
$$
\frac{\mu K^2}{(1+Kd_\Gamma(x))^2}g^{ij}\frac{\partial d_\Gamma}{\partial x^i}\cdot\frac{\partial d_\Gamma}{\partial x^j}
\geq \frac{\mu K^2}{(1+KD)^2}\frac{1}{1+\tau}
$$
Note that
$$
|-L\psi^\alpha+w^\alpha|\leq |-L\psi^\alpha|+|w^\alpha|\leq n|D^2\psi^\alpha|+|w^\alpha|.
$$

To make the right side of (\ref{key}), we require
\begin{equation}\label{key2}
\frac{\mu K^2}{(1+KD)^2}\frac{1}{1+\tau}\geq n|D^2\psi|+1\geq n|D^2\psi^\alpha|+|w^\alpha|.
\end{equation}
(by choosing $\mu>0$ large) and then we get
$$
(\partial_t-L)S(x,t)\geq 0, \ \ on \ \ \Omega\times [0,T).
$$
Note that on the boundary of $\Omega$, $S>0$ except $S(p,t)=0$ and at the initial time $t=0$,
$S(x,0)\geq 0$ on $\Omega$. Then by the maximum principle we know that
$$
S(x,t)> 0,  \ \ for  \ \ (x,t)\in \Omega\times (0,T).
$$
Applying the same process to the comparison function for each $\alpha=1,...,k$,
$$
\check{S}(x,t)=\mu \log(1+Kd_\Gamma(x))+f^\alpha-\psi^\alpha,
$$
we get
$$
\check{S}(x,t)<0,  \ \ for  \ \ (x,t)\in \Omega\times (0,T).
$$
Hence, at $(p,t)$, from the definition of the normal derivative of $f^\alpha-\psi^\alpha$, we have
$$
|\frac{\partial(f^\alpha-\psi^\alpha)}{\partial \nu}|(p,t)\leq \lim_{d_\Gamma(x)\to 0}\frac{\mu \log(1+Kd_\Gamma(x))}{d_\Gamma(x)}=\mu K,
$$
and then
$$
|\frac{\partial f}{\partial \nu}|(p,t)\leq |\frac{\partial \psi}{\partial \nu}|(p,t)+\mu K.
$$
By the Dirichlet data $f=\psi$ on the boundary we know that the tangent derivative of $f^\alpha-\psi^\alpha$ at $(p,t)$ is zero, which implies that
$$
|Df| \leq \mu K+\sqrt{2}|D\psi|
$$
on the boundary $\partial \Omega$. By minimizing $\mu K$ with the constraint (\ref{key2}) we know its minimum value is achieved at $ K=D^{-1}$ and
$$
\mu K=4D (1+\tau)(n\sup_\Omega |D^2\psi|+1).
$$

This completes the proof.
\end{proof}

Once we know Proposition \ref{mali2} and Theorem \ref{mali3}, we may use the same arguments in the proof of Theorem A in \cite{Wm} to prove the Theorem \ref{mali4}.

\end{document}